\documentclass{amsart}
\usepackage[hmargin = 1 in, vmargin = 1 in]{geometry}
\usepackage{amssymb, amsmath,amsthm}
\newtheorem{theorem}{Theorem}
\newtheorem{lemma}{Lemma}

\theoremstyle{definition}
\newtheorem{mydef}{Definition}

\begin{document}
\title[On a Problem of Saffari]{Generalizations on a Problem of Saffari}
\normalsize
\author[A. P. Mangerel]{Alexander P. Mangerel}
\address{Department of Mathematics\\ University of Toronto\\
Toronto, Ontario, Canada}
\email{sacha.mangerel@mail.utoronto.ca}
\begin{abstract}
We provide a generalization of a problem first considered by Saffari and fully solved by Saffari, Erd\H{o}s and Vaughan on direct factor pairs, to arbitrary finite families of direct factors, and solve it using a method of Daboussi.  We end with a few related open problems.\end{abstract}
\maketitle
\section{Introduction}
A common problem in analytic and combinatorial number theory is to determine statistical information on the sizes of sets of products of integers from a given sequence.  For instance, the Davenport-Erd\H{o}s theorem states that given any sequence $\mathcal{A} \subseteq \mathbb{N}$, its set of multiples $\mathcal{M}(\mathcal{A}) := \{ma : a \in \mathcal{A}, m \in \mathcal{N}\}$ has logarithmic density, i.e., for $\mathcal{C} := \mathcal{M}(\mathcal{A})$, the limit
\begin{equation*}
\delta(\mathcal{C}) := \lim_{x \rightarrow \infty} \frac{1}{\log x} \sum_{n \leq x \atop n \in \mathcal{C}} \frac{1}{n}
\end{equation*}
exists (see Chapter 5 of \cite{HaR}).  Saffari \cite{Saf1} considered an inverse problem in which the set of products was found to dictate statistical information regarding the sequences that formed these products, including a particular case in which the sequences were well-behaved in the following sense:  
\begin{mydef}
Let $\mathcal{A},\mathcal{B} \subseteq \mathbb{N}$ such that $1 \in \mathcal{A} \cap \mathcal{B}$.  Then $\mathcal{A}$ and $\mathcal{B}$ are said to be \emph{direct factors} of $\mathbb{N}$ if for each $n \in \mathbb{N}$ there exists a unique pair $(a,b) \in \mathcal{A} \times \mathcal{B}$ such that $n = ab$.
\end{mydef}
Recall that a sequence $\mathcal{S}$ is said to have \textit{natural density} if the limit $\lim_{x \rightarrow \infty} x^{-1}\sum_{n \leq x \atop n \in \mathcal{S}} 1$ exists. This limit is called the \textit{(natural) density} of $\mathcal{S}$ and is denoted by $d\mathcal{S}$. In his 1976 paper, Saffari proved the following theorem:
\begin{theorem}
Let $\mathcal{A},\mathcal{B}\subseteq \mathbb{N}$ be a pair of direct factors of $\mathbb{N}$.  Then if $H(\mathcal{S}):= \sum_{s \in \mathcal{S}} \frac{1}{s}$ denotes the harmonic sum over a set $\mathcal{S}$ and  $H(A) < \infty$, then $\mathcal{A}$ and $\mathcal{B}$ have natural density.  In particular, $d\mathcal{A} = 1/H(\mathcal{B}) = 0$ and $d\mathcal{B} = 1/H(\mathcal{A})$.
\end{theorem}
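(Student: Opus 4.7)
The strategy is to extract both density assertions from the single convolution identity implicit in the direct-factor definition. Concretely, unique factorization $n = ab$ with $a \in \mathcal{A}, b \in \mathcal{B}$ gives
\[
[x] = \sum_{a \in \mathcal{A},\, a \leq x} B(x/a),
\]
equivalently, $\zeta(s) = \mathcal{A}(s)\mathcal{B}(s)$ for $\Re(s) > 1$, where $\mathcal{A}(s) = \sum_{a \in \mathcal{A}} a^{-s}$ and similarly for $\mathcal{B}$. I plan to combine this with $H(\mathcal{A}) < \infty$ to deduce first $A(x) = o(x)$ and then $B(x) \sim x/H(\mathcal{A})$.

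For $d\mathcal{A} = 0$, a short dyadic tail estimate suffices. Given $\epsilon > 0$, pick $M$ with $\sum_{a > M,\,a \in \mathcal{A}} 1/a < \epsilon$; then for $x > 2M$,
\[
A(x) - A(x/2) \;\leq\; x \sum_{x/2 < a \leq x,\, a \in \mathcal{A}} 1/a \;\leq\; \epsilon x,
\]
and telescoping dyadically yields $\limsup A(x)/x \leq 2\epsilon$, so $A(x) = o(x)$. That $H(\mathcal{B}) = \infty$ (matching the claim $1/H(\mathcal{B}) = 0$) follows by comparing $\log x + O(1) = \sum_{n \leq x} 1/n$ with the convolution $\sum_{a \leq x,\,a\in\mathcal{A}}(1/a)\sum_{b \leq x/a,\,b\in\mathcal{B}}(1/b)$, which would otherwise be bounded by $H(\mathcal{A})H(\mathcal{B}) < \infty$, a contradiction.

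For $d\mathcal{B} = 1/H(\mathcal{A})$, set $\beta(y) := B(y)/y \in [0,1]$; the normalized identity reads $\sum_{a \leq x,\,a \in \mathcal{A}}(1/a)\beta(x/a) = 1 + O(1/x)$, and truncation past any $K$ costs only $\sum_{a > K,\,a \in \mathcal{A}} 1/a$, which vanishes as $K \to \infty$. The natural attempt is a compactness argument: pick $x_n \to \infty$ with $\beta(x_n) \to \beta^+ := \limsup \beta$, and by Cantor diagonalization extract a subsequence along which $\beta(x_n/a) \to \gamma_a \in [\beta^-,\beta^+]$ for every $a \in \mathcal{A}$, with $\gamma_1 = \beta^+$. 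Dominated convergence (using $|\beta| \leq 1$ and $\sum 1/a < \infty$) passes the identity to $\sum_{a \in \mathcal{A}} \gamma_a / a = 1$; since $\gamma_a \leq \beta^+$, this gives $\beta^+ \geq 1/H(\mathcal{A})$, and a symmetric $\liminf$ argument gives $\beta^- \leq 1/H(\mathcal{A})$.

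The hard part is upgrading these one-sided bounds to $\beta^+ = \beta^- = 1/H(\mathcal{A})$. Iterating the functional equation $k$ times yields an expansion of the form $\beta(x) - 1/H(\mathcal{A}) = (-(H(\mathcal{A}) - 1))^k \cdot (\textrm{averaged deviation}) + o(1)$; this contracts geometrically, and so collapses $\beta^+ = \beta^-$, only when $H(\mathcal{A}) < 2$. But $H(\mathcal{A}) \geq 2$ is certainly possible (for instance, $\mathcal{A} = \{2^k\}_{k \geq 0}$, $\mathcal{B} = \textrm{odd integers}$, has $H(\mathcal{A}) = 2$), so this crude iteration alone is insufficient. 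I would close the gap using Daboussi's method: either by Abel-summing the iterated expansion to trade the growing geometric factor against cancellation between successive averages, or by a Tauberian route applied to $\mathcal{B}(s) = \zeta(s)/\mathcal{A}(s)$, using the continuity of $\mathcal{A}(s)$ on $\Re(s) \geq 1$ (from absolute convergence) together with the nonvanishing of $\mathcal{A}(s)$ on the line $\Re(s) = 1$ to invoke Wiener--Ikehara. I expect this last step to be the most delicate part of the proof.
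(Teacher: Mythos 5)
Your preliminary reductions are all correct: the dyadic tail estimate for $A(x)=o(x)$, the divergence of $H(\mathcal{B})$, the normalized convolution identity $\sum_{a\le x,\,a\in\mathcal{A}}(1/a)\beta(x/a)=1+O(1/x)$, and the diagonal-subsequence argument yielding $\beta^-\le 1/H(\mathcal{A})\le\beta^+$. But that last pair of inequalities is the trivial sandwich in the unhelpful direction: the entire content of the theorem is that $\beta^+=\beta^-$, i.e.\ that $d\mathcal{B}$ exists, and your proposal never establishes this. Of the two repairs you offer, the first (iterating the functional equation) you correctly diagnose as non-contractive once $H(\mathcal{A})\ge 2$, and ``Abel-summing to trade the growing geometric factor against cancellation'' is not an argument — no source of cancellation between successive averages is identified. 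The second (Wiener--Ikehara applied to $\mathcal{B}(s)=\zeta(s)/\mathcal{A}(s)$) hinges on the nonvanishing of $\mathcal{A}(s)$ on $\Re(s)=1$, which you assert in passing but which does not follow from absolute convergence: that gives continuity on the closed half-plane, not a lower bound, and the naive estimate $|\mathcal{A}(1+it)|\ge 1-\sum_{a>1,\,a\in\mathcal{A}}1/a$ is vacuous precisely when $H(\mathcal{A})\ge 2$, the same regime in which your iteration fails. The nonvanishing of $\zeta$ transfers to $\mathcal{A}$ only in the open half-plane $\Re(s)>1$; on the boundary $\mathcal{A}(s)$ need not be analytic, and a zero there is not obviously excluded. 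So the decisive step is genuinely missing, not merely deferred.

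For contrast, the paper proves this statement as the case $m=2$ of its Theorem 2, following Daboussi, and obtains the two-sided bound structurally rather than analytically. It introduces the smoothed set $\mathcal{B}_y:=\{n: n_y\in\mathcal{B}\}$, where $n_y$ is the $y$-smooth part of $n$, and computes $d\mathcal{B}_y=\bigl(\sum_{a\in\mathcal{A},\,P^+(a)\le y}1/a\bigr)^{-1}$ exactly. The upper bound $\overline{d}\mathcal{B}\le d\mathcal{B}_y$ comes from an explicit injection of $\mathcal{B}$ into $\mathcal{B}_y$, $b\mapsto \pi_{\mathcal{B}}(b_y)\,b/b_y$, whose injectivity uses the uniqueness of the factorization $n=ab$; the matching lower bound $\underline{d}\mathcal{B}\ge d\mathcal{B}_y-d\mathcal{B}_y\bigl(H(\mathcal{A})-\sum_{a\in\mathcal{A},\,P^+(a)\le y}1/a\bigr)$ comes from subtracting the contribution of integers whose $\mathcal{A}$-component exceeds $1$. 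Letting $y\to\infty$ squeezes both sides to $1/H(\mathcal{A})$. That injection is the combinatorial input your soft compactness argument cannot see, and it is what you would need to supply to close the gap.
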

In 1979, Saffari, in a joint work with Erd\H{o}s and Vaughan \cite{Saf2}, subsequently proved that, in the case when $H(\mathcal{A}) = \infty$ as well, the natural density of $\mathcal{B}$ is also zero.  Daboussi gave a simplified proof of both of these results shortly thereafter \cite{Dab}.  \\
Motivated by this initial problem, we generalize the result in the following direction:
\begin{mydef}
Let $m \geq 2$ and let $\mathcal{A}_j \subseteq \mathbb{N}$ for $1 \leq j \leq m$.  Call $\{\mathcal{A}_1,\ldots,\mathcal{A}_m\}$ an \emph{$m$-family of direct factors} for $\mathbb{N}$ if for each $n \in \mathbb{N}$ there exists a unique $m$-tuple $(a_1,\ldots a_n) \in \mathcal{A}_1 \times \cdots \times \mathcal{A}_m$ such that $n = a_1 \cdots a_m$.
\end{mydef}
It is natural to ask whether there is a similar relationship between the densities of $\mathcal{A}_i$, should they exist, in terms of the properties of the other $n-1$ sequences. We answer this question in the affirmative:
\begin{theorem}
With the notation above, $d\mathcal{A}_i = \prod_{j=1 \atop j \neq i}^n H(\mathcal{A}_j)^{-1}$, where the right side is interpreted as zero when $H(\mathcal{A}_j) = \infty$ for some $j$.
\end{theorem}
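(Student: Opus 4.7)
The plan is to reduce the $m$-factor problem to the pair case already handled by Saffari and by Daboussi, by treating all of the factors other than $\mathcal{A}_i$ as a single aggregate set. Fix $i$ and define
\begin{equation*}
\mathcal{B}_i := \left\{ \prod_{j \neq i} a_j : a_j \in \mathcal{A}_j \text{ for every } j \neq i \right\}.
\end{equation*}
The first step is to check that $(\mathcal{A}_i, \mathcal{B}_i)$ is a pair of direct factors of $\mathbb{N}$ in the sense of Definition 1. The $m$-family condition gives every $n \in \mathbb{N}$ a unique representation $n = a_1 \cdots a_m$ with $a_j \in \mathcal{A}_j$, which after grouping yields a factorization $n = a_i \cdot b$ with $a_i \in \mathcal{A}_i$ and $b := \prod_{j \neq i} a_j \in \mathcal{B}_i$. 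Conversely, any such factorization $n = a \cdot b$ with $a \in \mathcal{A}_i$ and $b \in \mathcal{B}_i$ expands to an $m$-tuple and, by uniqueness, coincides with the one just constructed; in particular the grouping map $(a_j)_{j \neq i} \mapsto \prod_{j \neq i} a_j$ is a bijection from $\prod_{j \neq i} \mathcal{A}_j$ onto $\mathcal{B}_i$. Since the unique $m$-representation of $1$ forces $1 \in \mathcal{A}_j$ for every $j$, one also has $1 \in \mathcal{A}_i \cap \mathcal{B}_i$.

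Second, using the bijection above and rearrangement of series of non-negative terms, I would compute
\begin{equation*}
H(\mathcal{B}_i) = \sum_{b \in \mathcal{B}_i} \frac{1}{b} = \sum_{(a_j)_{j \neq i} \in \prod_{j \neq i} \mathcal{A}_j} \prod_{j \neq i} \frac{1}{a_j} = \prod_{j \neq i} H(\mathcal{A}_j),
\end{equation*}
both sides being simultaneously finite or $+\infty$.

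Third, I would apply Theorem 1 together with the Saffari--Erd\H{o}s--Vaughan extension (both recalled in the introduction) to the direct factor pair $(\mathcal{A}_i, \mathcal{B}_i)$: in each of the possible regimes---$H(\mathcal{A}_i) < \infty$, $H(\mathcal{B}_i) < \infty$, or both infinite---one obtains that $d\mathcal{A}_i$ exists and equals $1/H(\mathcal{B}_i)$, with the convention $1/\infty = 0$. Combined with the product identity above this yields the theorem.

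The main obstacle is conceptual rather than technical: it lies in recognising that the $m$-family condition descends to the pair $(\mathcal{A}_i, \mathcal{B}_i)$, so that Theorem 2 becomes essentially a corollary of the pair case treated by Daboussi's method. A self-contained alternative would be to run Daboussi's argument directly on the multi-factor convolution identity $1_{\mathcal{A}_1} \ast \cdots \ast 1_{\mathcal{A}_m} = 1$, establishing the existence of $d\mathcal{A}_i$ via a $\liminf/\limsup$ argument parallel to the pair case and then pinning down the value by Fatou-type interchange of limit and sum when $\prod_{j \neq i} H(\mathcal{A}_j) < \infty$; but the reduction route sketched above seems both shorter and more transparent.
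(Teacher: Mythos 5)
Your reduction is correct, and it is a genuinely different route from the one taken in the paper. You collapse the $m-1$ factors other than $\mathcal{A}_i$ into the single set $\mathcal{B}_i = \{\prod_{j \neq i} a_j\}$, verify that $(\mathcal{A}_i,\mathcal{B}_i)$ is a direct factor pair in the sense of Definition~1 (the uniqueness of the $m$-tuple representation does give both the uniqueness of the pair $(a,b)$ and the injectivity of the grouping map, since two preimages of the same $b$ would give two $m$-representations of $b$ with $i$th component $1$), compute $H(\mathcal{B}_i) = \prod_{j\neq i} H(\mathcal{A}_j)$ by Tonelli for nonnegative terms, and then invoke Theorem~1 together with the Erd\H{o}s--Saffari--Vaughan extension as black boxes; the case analysis on which harmonic sums are finite goes through because each $H(\mathcal{A}_j) \geq 1$, so the product is infinite exactly when some factor is. The paper instead reruns Daboussi's argument from scratch in the $m$-factor setting: it introduces the $y$-smooth truncations $\mathcal{A}_{i,y} = \{n : n_y \in \mathcal{A}_i\}$, shows $d\mathcal{A}_{i,y}$ exists and dominates $\overline{d}\mathcal{A}_i$ via an injection $\mathcal{A}_i \to \mathcal{A}_{i,y}$, and then establishes a matching lower bound for $\underline{d}\mathcal{A}_i$ by partitioning $\mathbb{N}\setminus\mathcal{A}_i$ according to which components of the $m$-tuple are nontrivial. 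Your approach is shorter and makes Theorem~2 essentially a corollary of the known pair case; the paper's approach is self-contained, produces the intermediate quantitative statements about the truncated densities, and adapts the method itself to the multi-factor setting, which is the kind of flexibility one would want for the variants raised in Section~3 (e.g.\ direct factors of a proper subsequence $S$), where no pair-case theorem is available to reduce to.
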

The proof follows a similar thread of ideas as that of Daboussi, but with certain necessary modifications.  In any case, we provide supplementary elaboration where needed. \\
We can construct examples of the families described in Definition 2: \\
i) Let $m$ be any positive integer and let $\{r_1,\ldots,r_{\phi(m)}\}$ be an ordering of the $\phi(m)$ residue classes coprime to $m$. Let $\mathcal{A}_j := \{n \in \mathbb{N} : p | n \Rightarrow p \equiv r_j \text{ (mod $m$)}\})$, the set of all integers composed only of primes congruent to $r_j$ mod $m$. \\
ii) Let $K/\mathbb{Q}$ be a Galois extension and let $\mathcal{A}_d$ denote the set of integers divisible only by rational primes such that the primes lying above them have relative degree $d$, where $d | [K:\mathbb{Q}]$.  This partitions the primes and thus gives a family of direct factors indexed by the divisors of the degree of the field extension. \\ \\
In the remainder of the paper, we denote by $P^+(n)$ and $P^-(n)$ the largest and smallest prime factors, respectively, of a positive integer $n$. 
\section{Proof of Theorem 2}
\begin{proof}
First, fix $y \geq 2$.  For each $n \in \mathbb{N}$ set $n_y := \prod_{p^{\nu}||n \atop p \leq y} p^{\nu}$ and let $\mathcal{A}_{i,y} := \{n : n_y \in \mathcal{A}_i\}$.  Also, for each $i$ let $\pi_i(n) = a_i \in \mathcal{A}_i$ such that $a_i$ is the $i$th component of the $n$-tuple into which $n$ decomposes (this being well-defined by hypothesis).  We remark that $P^+(ab) \leq y$ if and only if $P^+(a),P^+(b) \leq y$, and hence
\begin{equation*}
\prod_{p \leq y} (1-p^{-1})^{-1} = \sum_{P^+(n) \leq y} \frac{1}{n} = \sum_{P^+(a_1\cdots a_m) \leq y \atop a_i \in \mathcal{A}_i} \frac{1}{a_1\cdots a_m} = \prod_{i = 1}^m \left(\sum_{P^+(a_i) \leq y \atop a_i \in \mathcal{A}_i} \frac{1}{a_i} \right),
\end{equation*}
whence for each $i$, we have (provided each $\mathcal{A}_j$ is nonempty and $y$ is chosen large enough to produce a non-empty sum)
\begin{equation*}
\sum_{P^+(a_i) \leq y} \frac{1}{a_i} = \prod_{p \leq y} (1-p^{-1}) \prod_{j = 1 \atop j \neq i}^m \left(\sum_{P^+(a_j) \leq y \atop a_j \in \mathcal{A}_j} \frac{1}{a_j}\right)^{-1}.
\end{equation*}
In preparation for the remainder of the proof, we prove the following
\begin{lemma} \label{LEM1}
The density $d\mathcal{A}_{i,y}$ exists, and is equal to $\prod_{j=1 \atop j \neq i}^m \left(\sum_{P^+(a) \leq y \atop a \in \mathcal{A}_j} \frac{1}{a}\right)^{-1}$.  Moreover, if $x > 0$ and $A_i(x) := \sum_{a_i \leq x \atop a_i \in \mathcal{A}_{i}} 1$ then $A_i(x) \leq A_{i,y}(x)$.
\end{lemma}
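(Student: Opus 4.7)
The plan is to write $\mathcal{A}_{i,y}$ as a disjoint union indexed by the $y$-smooth elements of $\mathcal{A}_i$, count each fibre via the elementary sieve estimate for $y$-rough integers, then take $x \to \infty$ to read off the density. The key structural observation is that membership $n \in \mathcal{A}_{i,y}$ depends only on $n_y$, and every $n$ factors uniquely as $n = n_y \cdot m$ with $P^-(m) > y$; this yields the disjoint decomposition
\[
\mathcal{A}_{i,y} \;=\; \bigsqcup_{\substack{a \in \mathcal{A}_i \\ P^+(a) \leq y}} \bigl\{\, a m : P^-(m) > y \,\bigr\}.
\]

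For the density, I would apply the Legendre-type estimate
\[
\#\{m \leq X : P^-(m) > y\} \;=\; X \prod_{p \leq y} (1 - p^{-1}) + O\bigl(2^{\pi(y)}\bigr)
\]
with $X = x/a$, summed over $a \leq x$ in the above index set. The number of such $a$ is at most the count $\Psi(x,y)$ of $y$-smooth integers up to $x$, which is $O_y((\log x)^{\pi(y)}) = o(x)$, so the accumulated error is $o(x)$ for $y$ fixed. Writing $S_i(y) := \sum_{a \in \mathcal{A}_i,\, P^+(a) \leq y} 1/a$ and noting $S_i(y) \leq \prod_{p \leq y}(1-p^{-1})^{-1} < \infty$, the main term converges as $x \to \infty$ to give
\[
d\mathcal{A}_{i,y} \;=\; \prod_{p \leq y}(1-p^{-1}) \cdot S_i(y).
\]
Inserting the identity $\prod_{p \leq y}(1-p^{-1})^{-1} = \prod_{j=1}^m S_j(y)$ from the display immediately preceding the lemma converts this to $\prod_{j \neq i} S_j(y)^{-1}$, as claimed.

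For the inequality $A_i(x) \leq A_{i,y}(x)$, I would establish the set inclusion $\mathcal{A}_i \subseteq \mathcal{A}_{i,y}$, i.e.\ that $a \in \mathcal{A}_i$ implies $a_y \in \mathcal{A}_i$. The decomposition of $a$ is the trivial tuple with $a$ in slot $i$; writing $a = a_y \cdot a^{(y)}$ as a product of coprime $y$-smooth and $y$-rough parts, I would argue that the direct-factor decomposition splits across this coprime factorization, placing $a_y$ in slot $i$ and forcing $a_y \in \mathcal{A}_i$. The main obstacle I anticipate is precisely this multiplicative compatibility of the direct-factor structure under coprime splits, which is not manifest from the bare definition. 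The justification passes through the observation (implicit in the displayed identity preceding the lemma) that $\{\mathcal{A}_j \cap \{n : P^+(n) \leq y\}\}_j$ is itself a direct-factor system for the monoid of $y$-smooth integers, together with the parallel statement for $y$-rough integers; once this closure is in hand, the inclusion $a_y \in \mathcal{A}_i$ follows from uniqueness of decomposition, and the remaining counting is essentially a multi-index adaptation of Daboussi's argument.
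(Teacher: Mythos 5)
Your computation of $d\mathcal{A}_{i,y}$ is correct and follows essentially the paper's route: the same fibring of $\mathcal{A}_{i,y}$ over the $y$-smooth elements of $\mathcal{A}_i$, with the Legendre estimate and the bound on the number of $y$-smooth integers up to $x$ standing in for the paper's dominated-convergence argument. Either device works for fixed $y$, and your version is arguably more self-contained.

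The second half, however, has a genuine gap: the inclusion $\mathcal{A}_i \subseteq \mathcal{A}_{i,y}$ you propose to prove is false in general, precisely because the direct-factor decomposition need \emph{not} split across coprime factorizations. Concretely, take $m = 2$ and $\mathcal{A}_1 = \{1,6\}$; identifying $n$ with the pair $(v_2(n),v_3(n))$ (all primes $p \geq 5$ being placed entirely in $\mathcal{A}_2$), one builds $\mathcal{A}_2$ greedily so that the translates of $\{(0,0),(1,1)\}$ by the chosen points tile $\mathbb{N}_0^2$ exactly once; this gives a legitimate direct factor pair with $1 \in \mathcal{A}_1 \cap \mathcal{A}_2$ and $2,3 \in \mathcal{A}_2$. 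Then $6 \in \mathcal{A}_1$ but $6_y = 2 \notin \mathcal{A}_1$ for $y = 2$, so $6 \notin \mathcal{A}_{1,y}$. Your appeal to the fact that $\{\mathcal{A}_j \cap \{P^+ \leq y\}\}_j$ and $\{\mathcal{A}_j \cap \{P^- > y\}\}_j$ are direct-factor systems for the $y$-smooth and $y$-rough monoids is true but does not close the gap: in the example the decomposition of $6$ is $(6,1)$, while the componentwise product of the decompositions of $6_y = 2$ and $6/6_y = 3$ is $(1,6)$, so uniqueness is applied to the wrong tuple. The paper avoids this by constructing a genuinely non-trivial injection $\phi_i : \mathcal{A}_i \to \mathcal{A}_{i,y}$, namely $a \mapsto \pi_i(a_y)\cdot a/a_y$, which replaces the $y$-smooth part of $a$ by its $i$-th component: this lands in $\mathcal{A}_{i,y}$ by construction, does not increase size since $\pi_i(a_y) \mid a_y$, and is injective by uniqueness of decompositions; that yields $A_i(x) \leq A_{i,y}(x)$ with no set inclusion needed. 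You should replace your closure argument by this injection (or some equivalent counting device).
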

\begin{proof}[Proof of Lemma \ref{LEM1}]
This is an elaboration of the proof of Daboussi.  We have
\begin{align*}
x^{-1}\sum_{n \leq x \atop n_y \in \mathcal{A}_i} 1 &= x^{-1}\sum_{a \leq x \atop P^+(a) \leq y, a \in \mathcal{A}_i} \sum_{m \leq \frac{x}{a} \atop P^-(m) > y} 1 = \sum_{a \leq x \atop P^+(a) \leq y, a \in \mathcal{A}_i} \frac{1}{a} \cdot \left(\frac{a}{x}\sum_{m \leq \frac{x}{a} \atop P^-(m) > y} 1\right).
\end{align*}
Note that $d\{n : P^-(n) > y\} = \prod_{p \leq y} (1-p^{-1})$ by the inclusion-exclusion principle, so the inner sum, normalized by $\frac{a}{x}$, is convergent, while the outer sum also converges (indeed it is increasing and bounded by the product $\prod_{p \leq y} (1-p^{-1})^{-1}$ for fixed $y$). Applying a discrete version of the dominated convergence theorem (say, defined by the sequence of functions $\{g_x(t)\}_x$ with $g_x(t) := f(\frac{x}{t})1_{(1,x)}(t)$) with Stieltjes integrals
\begin{equation*}
\int_1^{x} g_x(t) d\{\sum_{a \leq t} \frac{1}{a}\}
\end{equation*}
we arrive at the existence of the limit
\begin{equation*}
d\mathcal{A}_{i,y} = \lim_{x \rightarrow \infty} x^{-1}\sum_{n \leq x \atop n_y \in \mathcal{A}_{i}} 1 = \prod_{p \leq y} (1-p^{-1}) \sum_{P^+(a) \leq y \atop a \in \mathcal{A}_i} \frac{1}{a},
\end{equation*}
which shows the first part of the claim.  \\
\indent Now for each $i$, define $\phi_i: \mathcal{A}_{i} \rightarrow \mathcal{A}_{i,y}$ to be the mapping $a \mapsto \pi_i(a_y)\frac{a}{a_y}$.  Note that this is well-defined because $\frac{a}{a_y}$ has no prime factors less than $y$, and $\pi_i(a_y) \in \mathcal{A}_{i}$ with $P^+(a_y) \leq y$ by definition, so the $y$-smooth part of $\phi(a)$ is in $\mathcal{A}_{i,y}$, as required. We claim that $\phi_i$ is injective and in that case
\begin{equation*}
A_{i,y}(x) = \sum_{n \leq x \atop n_y \in \mathcal{A}_{i}} 1 \geq \sum_{n \leq x \atop n_y \in \mathcal{A}_{i}} |\phi^{-1}(a)| = \sum_{a \leq x \atop a\in \mathcal{A}_{i}} 1 = A_i(x)
\end{equation*}
which is the claim of the statement.  Indeed, if $a,a' \in \mathcal{A}_{i}$ such that $\phi(a) = \phi(a')$ then since $\frac{a_y}{\pi_i(a_y)} = \prod_{j \neq i} \pi_j(a_y)$, we have $a\prod_{j \neq i}\pi_j(a_y') = a'\prod_{j \neq i} \pi_j(a_y)$.  Since the decompositions of integers into products of elements from $\mathcal{A}_{j}$ are unique, and $a,a' \in \mathcal{A}_{i}$ while $\pi_j(a_y),\pi_j(a_y') \in \mathcal{A}_{j}$ for each $j \neq i$, it follows that $a = a'$ as $\mathcal{A}_{i}$ part, and we're done.
\end{proof}
Lemma \ref{LEM1} allows us to immediately deduce that 
\begin{equation*}
\overline{d}\mathcal{A}_{i} \leq d\mathcal{A}_{i,y} = \prod_{j=1 \atop j \neq i}^m \left(\sum_{P^+(a) \leq y \atop a \in \mathcal{A}_{j}} \frac{1}{a}\right)^{-1}.
\end{equation*}
This tells us, in particular, that if any of the sums $H(\mathcal{A}_j) = \infty$ then $d\mathcal{A}_i$ exists and is equal to zero (by taking $y \rightarrow \infty$).  \\
\indent We are left to check the case when all of $H(\mathcal{A}_j) < \infty$.  We need a lower bound to match the upper bound in the lemma to finish the proof. In this direction, we establish the following
\begin{lemma}
The following lower bound holds:
\begin{equation*}
\underline{d}\mathcal{A}_{i} \geq d\mathcal{A}_{i,y} + 1-d\mathcal{A}_{i,y}\prod_{j = 1 \atop j \neq i}^m \left(\sum_{a_j \in \mathcal{A}_j} \frac{1}{a_j} \right)
\end{equation*}
for each $1 \leq i \leq m$.
\end{lemma}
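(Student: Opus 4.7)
The plan is to obtain a complementary lower bound on $A_i(x)$ by a double-counting argument based on the unique decomposition guaranteed by the $m$-family property. Every integer $n \leq x$ is counted exactly once in the sum over tuples $(a_1,\ldots,a_m) \in \prod_{j=1}^m \mathcal{A}_j$ with $a_1 \cdots a_m \leq x$. Isolating the tuples for which $a_j = 1$ for all $j \neq i$ (which count precisely the elements of $\mathcal{A}_i \cap [1,x]$) yields
\begin{equation*}
\lfloor x \rfloor \;=\; A_i(x) \;+\; \sum_{\substack{(a_j)_{j \neq i} \\ a_j \in \mathcal{A}_j,\, \prod_{j \neq i} a_j > 1}} A_i\!\left(\frac{x}{\prod_{j \neq i} a_j}\right),
\end{equation*}
which rearranges into an expression for $A_i(x)$ as $\lfloor x \rfloor$ minus a correction.

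I would next upper-bound each summand on the right via the inequality $A_i(t) \leq A_{i,y}(t)$ from Lemma~\ref{LEM1}, obtaining
\begin{equation*}
A_i(x) \;\geq\; \lfloor x \rfloor \;-\; \sum_{\substack{(a_j)_{j \neq i} \\ \prod_{j \neq i} a_j > 1}} A_{i,y}\!\left(\frac{x}{\prod_{j \neq i} a_j}\right).
\end{equation*}
Under the standing hypothesis $H(\mathcal{A}_j) < \infty$ for $j \neq i$, the series $\sum_{(a_j)_{j \neq i}}(\prod_{j \neq i} a_j)^{-1}$ converges to $\prod_{j \neq i} H(\mathcal{A}_j)$. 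Plugging the asymptotic $A_{i,y}(t) = t\,d\mathcal{A}_{i,y} + o(t)$ from Lemma~\ref{LEM1} into the sum, the main term contributes $x\, d\mathcal{A}_{i,y}\bigl(\prod_{j \neq i} H(\mathcal{A}_j) - 1\bigr)$, and dividing by $x$ gives
\begin{equation*}
\frac{A_i(x)}{x} \;\geq\; d\mathcal{A}_{i,y} + 1 - d\mathcal{A}_{i,y}\prod_{j \neq i} H(\mathcal{A}_j) + o(1).
\end{equation*}

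The main obstacle is justifying the interchange of the limit $x \to \infty$ with the infinite sum over tuples $(a_j)_{j \neq i}$. I would handle this by observing that $|A_{i,y}(t)/t - d\mathcal{A}_{i,y}|$ is uniformly bounded in $t$ (trivially by $1 + d\mathcal{A}_{i,y}$) and tends to $0$ as $t \to \infty$. Consequently the normalized error $x^{-1}\bigl[A_{i,y}(x/\prod_{j \neq i} a_j) - (x/\prod_{j \neq i} a_j)\,d\mathcal{A}_{i,y}\bigr]$ is dominated by $C(\prod_{j \neq i} a_j)^{-1}$, a summable majorant in the tuple variables, and tends pointwise to $0$ as $x \to \infty$ for each fixed tuple. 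Dominated convergence applied to counting measure on the tuple space then yields that the full error sum is $o(x)$, and passing to $\liminf_{x \to \infty}$ in the displayed inequality delivers the stated lower bound on $\underline{d}\mathcal{A}_i$.
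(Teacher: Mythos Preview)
Your argument is correct and is essentially the paper's approach: both express $\lfloor x\rfloor - A_i(x)$ as a sum of $A_i(x/k)$ over products $k=\prod_{j\neq i}a_j>1$, replace $A_i$ by $A_{i,y}$ via Lemma~\ref{LEM1}, and pass to the limit by dominated convergence. The only cosmetic difference is that the paper first groups the tuples $(a_j)_{j\neq i}$ by their support $S=\{j:a_j>1\}$ before summing (and then recombines via $\sum_S\prod_{j\in S}(H(\mathcal{A}_j)-1)=\prod_{j\neq i}H(\mathcal{A}_j)-1$), whereas you sum over all tuples directly.
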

\begin{proof}
In what follows, let $1_i$ denote the characteristic function of $\mathcal{A}_{i}$ for each $i$.  Remark that $a \in \mathcal{A}_{i}$ if, and only if, the $n$-tuple representing $a$ consists of 1 at every component except for the $i$th component.  It follows that $a \in \mathcal{A}_{i}$ is representable as $a = a_1 \cdots a_n$ if, and only if, $(1_j-\delta)(n) = 0$ for each $j \neq i$, where $\delta(n)$ is 1 or 0 according to whether $n=1$ or not. \\
\indent For each $k \notin \mathcal{A}_{i}$ there exists a set $S_k \subseteq \{1,\ldots,n\}\backslash \{i\}$ such that $\pi_j(k) \neq 1$ if and only if $j \in S_k$, and by construction the converse that any such set corresponds to an element in the complement of $\mathcal{A}_{i}$ also holds.  Thus, we can form a partition of $\mathbb{N} \backslash \mathcal{A}_{i}$. Write $f_{S_k}$ to be its characteristic function. For each $S \subset \{1,\ldots,n\}\backslash \{i\}$ let $\mathcal{V}_S$ denote the set of integers $k \notin \mathcal{A}_i$ such that $S_k = S$ by the notation above.  Then $\{\mathcal{V}_S : S \subset \{1,\ldots,n\}\backslash\{i\}, |S| > 0\}$ forms a partition of $\mathbb{N}\backslash \mathcal{A}_i$, whence
\begin{align*}
x^{-1}\sum_{k \leq x} 1_i(k) &= 1-x^{-1}\sum_{S \subseteq \{1,\ldots,n\}\backslash \{i\} \atop |S| > 0} \sum_{k \in \mathcal{V}_S} f_S(k) = 1-\sum_{S \subseteq \{1,\ldots,n\}\backslash \{i\} \atop |S| > 0} \sum_{k \leq x \atop \pi_j(k) \neq 1 \leftrightarrow j \in S} \frac{1}{k}\cdot \frac{k}{x}\sum_{m \leq \frac{x}{k}} 1_i(m) \\
&\geq 1-\sum_{S \subseteq \{1,\ldots,n\}\backslash \{i\} \atop |S| > 0} \sum_{k \leq x \atop \pi_j(k) \neq 1 \leftrightarrow j \in S} \frac{1}{k}\cdot \frac{k}{x}\mathcal{A}_{i,y}\left(\frac{x}{k}\right).
\end{align*}
Appealing once again to the Dominated Convergence Theorem, we have
\begin{equation*}
\underline{d}\mathcal{A}_i \geq 1-d\mathcal{A}_{i,y}\sum_{S \subseteq \{1,\ldots,n\}\backslash \{i\} \atop |S| > 0} \sum_{k: \pi_j(k) \neq 1 \leftrightarrow j \in S} \frac{1}{k}.
\end{equation*}
As a result of the partition created, we have
\begin{equation*}
\sum_{k: \pi_j(k) \neq 1 \leftrightarrow j \in S} \frac{1}{k} = \prod_{j \in S} \left(\sum_{a_j \in \mathcal{A}_j} \frac{1}{a} - 1\right),
\end{equation*}
and so the sum above becomes, after introducing the contribution for $S = \emptyset$,
\begin{align*}
\underline{d}\mathcal{A}_i &\geq 1-d\mathcal{A}_{i,y}\sum_{S \subseteq \{1,\ldots,n\}\backslash \{i\}}\prod_{j \in S} \left(\sum_{a_j \in \mathcal{A}_j} \frac{1}{a} - 1\right) + d\mathcal{A}_{i,y}\\
&= d\mathcal{A}_{i,y} + 1-d\mathcal{A}_{i,y}\prod_{j = 1 \atop j \neq i}^n \left(1+\left(\sum_{a_j \in \mathcal{A}_j} \frac{1}{a_j} -1\right)\right) = d\mathcal{A}_{i,y}+1-d\mathcal{A}_{i,y}\prod_{j = 1 \atop j \neq i}^n \left(\sum_{a_j \in \mathcal{A}_j} \frac{1}{a_j}\right),
\end{align*}
which proves the lemma.
\end{proof}
To finish the proof, we write $\sum_{a_j \in \mathcal{A}_j} \frac{1}{a_j} = \sum_{a_j \in \mathcal{A}_{j} \atop P^+(a_j) \leq y} \frac{1}{a_j} + \sum_{a_j \in \mathcal{A}_j \atop P^+(a_j) > y} \frac{1}{a_j}$, noting that the second sum vanishes as $y \rightarrow \infty$.  We have by the first lemma that
\begin{equation*}
\underline{d}\mathcal{A}_{i} \geq d\mathcal{A}_{i,y} - d\mathcal{A}_{i,y} \left(\prod_{j = 1 \atop j \neq i}^n \left(H\left(\mathcal{A}_{j,y}\right) + H\left(\mathcal{A}_{i} \backslash \mathcal{A}_{j,y}\right)\right) - \prod_{j = 1 \atop j \neq i}^n H\left(\mathcal{A}_{j,y}\right)\right).
\end{equation*}
Remark that in the bracketed term, the product $\prod_{j = 1 \atop j \neq i}^n H(\mathcal{A}_{j,y})$ is cancelled off, and each remaining term is multiplied by some factor $H(\mathcal{A}_j \backslash \mathcal{A}_{j,y})$ for $j \neq i$.  As all $H(\mathcal{A}_j)$ are assumed finite, the former terms go to zero as $y \rightarrow \infty$, and hence, for any $\epsilon > 0$ we can choose $y$ (depending only on $\epsilon$ and $n$) large enough such that
\begin{equation*}
\underline{d}\mathcal{A}_{i} \geq d\mathcal{A}_{i,y} - \epsilon.
\end{equation*}
Thus, $d\mathcal{A}_i$ exists and is equal to $\lim_{y \rightarrow \infty} d\mathcal{A}_{i,y}$, implying the theorem.
\end{proof}
\section{Open Problems and other Generalizations}
Instead of considering collections of integer sequences representing all positive integers uniquely, we could restrict to respresentations of some subsequene of $\mathbb{N}$.
\begin{mydef}
Let $S \subseteq \mathbb{N}$.  Call $\mathcal{A}_1,\mathcal{A}_2$ a \emph{pair of direct factors for $S$} if for each $s \in S$ there exists a unique pair $(a_1,a_2) \in \mathcal{A}_1 \times \mathcal{A}_2$ such that $s = a_1a_2$.
\end{mydef}
Note that it may be that $S \subsetneq \{a_1a_2 : (a_1,a_2) \in \mathcal{A}_1 \times \mathcal{A}_2\}$.  All we require is that the map $(a,b) \mapsto ab$ be an injection on the preimage of $S$. We seek to know whether any analogous relationship will exist between $\mathcal{A}_1$ and $\mathcal{A}_2$ according to the properties of $S$ (which, for instance, might require $S$ to possess natural density).\\
\indent Another natural question is to classify the set of direct factors of $\mathbb{N}$, and more generally, of sequences $S$ of the type considered in answering the above problem.  We may remark, for example, that there is no $\mathcal{A}$ such that $(\mathcal{A},\mathcal{A})$ is a direct factor pair even if we do not distinguish between $(a,a')$ and $(a',a)$. Indeed, $\mathcal{A}$ must contain every prime factor and 1, implying that it cannot contain any squares of primes and hence must contain all cubes of primes.  In this case, however, it will not contain any fourth powers of primes since otherwise one should have $p^4 = p\cdot p^3 = p^4 \cdot 1$.  As a result, $\mathcal{A} \cdot \mathcal{A}$ cannot contain any fifth powers of primes, as the only smaller such powers are cubes and the primes themselves.\\
\indent Conversely, it is possible that a sequence have infinitely many direct factor pairs.  Indeed, let $S \subset \mathbb{N}$ be a primitive sequence, i.e., such that for any two $s',s \in S$ with $s' < s$ then $s' \nmid s$. Let $\{S_1,S_2\}$ be a partition of $S$ and set $S' := S \cup \{1\}$ and $S_j' := S_j \cup \{1\}$ for $j = 1,2$.  Then clearly each $s \in S'$ has the form $s = s\cdot 1$ for $s \in S_1'$ or $s \in S_2'$, and moreover if $s = s_1s_2$ then one of $s_1$ and $s_2$ must be 1, otherwise $s_j | s$, contradicting primitivity.  If $S$ is an infinite such sequence (these do exist, an example furnished by the set $\{p_i^i : i \geq 1 \}$, where $p_i$ denotes the $i$th prime) then there are infinitely many such partitions providing distinct direct factor pairs. 

\end{document}